\documentclass[11pt]{amsart}
\usepackage{amssymb,hyperref}
\usepackage{xcolor}
\usepackage{soul}

\usepackage{thmtools} 
\usepackage{cleveref}

\newtheorem{theorem}{Theorem}
\newtheorem{lemma}[theorem]{Lemma}
\newtheorem{proposition}[theorem]{Proposition}
\newtheorem{corollary}[theorem]{Corollary}

\numberwithin{equation}{section}
\theoremstyle{definition}

\newtheorem{example}[theorem]{Example}

\DeclareMathOperator{\Isom}{Isom}
\newcommand{\Z}{\mathbb Z}
\newcommand{\R}{\mathbb R}
\newcommand{\E}{\mathbb E}
\newcommand{\Q}{\mathbb Q}
\newcommand{\RP}{\mathbb {RP}}

\title{On Sections of Quotient maps by Isometries}
\date{\today}

\author{Suyoung Choi}
\address{Department of Mathematics, Ajou University, Suwon 16499, Republic of Korea}
\email{schoi@ajou.ac.kr}

\author{Tao Gong}
\address{Department of Mathematics, University of Western Ontario, 1151 Richmond Street, London, Ontario, N6A 5B7, Canada}
\email{tgong23@uwo.ca}

\keywords{strict fundamental domain, quotient map, continuous section,
reflection group, dissecting reflection, Weyl group}
\subjclass[2020]{Primary 57S30; Secondary 20F55, 57S25}

\begin{document}
\begin{abstract}
Suppose that a discrete group $G$ acts smoothly, properly, and effectively on a connected smooth manifold $M$, and let $q:M\to M/G$ be the quotient map. We prove that if $q$ admits a continuous section, then $G$ is generated by reflections. We also establish converse results for dissecting reflections and give applications to Euclidean and lattice actions.
\end{abstract}

\maketitle

\section{Introduction}\label{section:intro}
Let a group $G$ act on a topological space $M$, and let $q\colon M\to M/G$ be the quotient map.
A subset $D\subset M$ is called a \textbf{strict fundamental domain} if the restriction $q|_D\colon D\to M/G$ is a homeomorphism.
Equivalently, a strict fundamental domain exists if and only if $q$ admits a continuous section.

Such a section exists only for some special cases. For example, 
$$
  \frac{\R^2}{SO(2)}\cong [0,\infty)\times \{0\},\qquad \frac{\R^2}{O(1)\times O(1)}\cong [0,\infty)\times [0,\infty),
$$
where the displayed homeomorphisms identify strict fundamental domains for the corresponding quotient maps. More generally, the second construction extends to all finite linear reflection groups; see \cite{humphreysReflectionGroupsCoxeter1992} for the classical theory of reflection groups. In this paper, we prove that ``existing sections'' and ``reflection groups'' are equivalent under suitable conditions.

Throughout the paper, $M$ is a connected smooth  manifold without boundary, and $G$ is a discrete group that acts smoothly, properly, and effectively on~$M$. 
There exists a complete $G$-invariant smooth Riemannian metric on~$M$ by~\cite[Theorem~0.2]{kankaanrintaProperSmoothGmanifolds2005}.
We fix such a metric throughout the paper.

An element $r\in G$ is called a \textbf{reflection over $M$} if some connected component of~$M^r$ has codimension one.  The main result of this paper is as follows.

\begin{theorem}\label{thm:main}
    If $q\colon M\to M/G$ admits a continuous section, then $G$ is generated by reflections.
\end{theorem}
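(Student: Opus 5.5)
Let $s\colon M/G\to M$ be a continuous section and set $D=s(M/G)$, a closed strict fundamental domain meeting each orbit in exactly one point. Let $W\le G$ be the subgroup generated by all reflections over $M$; it is normal, since a conjugate of a reflection is a reflection. The plan is to show $W=G$ by a chamber argument. We first study the complement $M^\circ$ of the union $H$ of all codimension-one fixed components $\mathrm{Fix}$ of reflections. Properness and smoothness make this family of walls locally finite, so $M^\circ$ is open and dense.

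Let $C$ be a connected component (a chamber) of $M^\circ$. We will show that $W$ acts simply transitively on the set of chambers, which gives $M=W\cdot \overline C$. Transitivity comes from joining two chambers by a path in general position crossing walls transversally. Such a path exists because the set of points lying on two walls, or on fixed sets of codimension $\ge 2$, has codimension $\ge 2$ and does not disconnect $M$. Each wall crossing is undone by the corresponding reflection. To get the standard structure, we will use the local linear model at a fixed point given by the exponential map of the invariant metric: near $x$, the stabilizer $G_x$ acts linearly on $T_xM$, and the reflections in $G_x$ generate a finite linear reflection group $W_x$.

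The key step is to show that $G$ preserves $C$ only trivially modulo $W$. More precisely, let $\Gamma=\{g\in G: gC=C\}$. We already have $G=W\Gamma$, and we need $\Gamma=1$. Suppose $g\in\Gamma$ with $g\neq 1$. We use the section to derive a contradiction. Translating by $W$, we may assume $D$ meets $C$; in fact $q(C)$ is open in $M/G$. Consider a point $x\in C$ whose stabilizer is trivial; such points are dense by effectiveness and properness. Then $\Gamma$ acts freely on a dense open subset of $C$ with nontrivial identifications. The quotient $C/\Gamma$ then embeds as an open subset of $M/G$, namely the image of the reflection-free region.

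On that region, $q$ restricted to the free part is a covering map $C_{\rm free}\to C_{\rm free}/\Gamma$ that is nontrivial (degree $|\Gamma|>1$ locally, or an infinite cover), and $s$ gives a continuous section of it. A section of a covering over a connected base picks out a sheet. Here we must check connectedness of $C_{\rm free}/\Gamma$: the non-free points in $C$ are fixed sets of non-reflections, which have codimension $\ge 2$, so removing them keeps $C$ connected. The section image is then a sheet mapped homeomorphically onto the base. Near a boundary point of the sheet inside $C$, namely a point $y$ with nontrivial stabilizer $G_y\subset\Gamma$ acting with no reflections, we get a contradiction. In the linear model at $y$, the quotient $T_yM/G_y$ near $0$ would need a continuous section. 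However, $G_y$ is a nontrivial finite group acting orthogonally without reflections, so the unit sphere minus the fixed set maps to its quotient as a nontrivial connected cover, and no section exists. If $\Gamma$ acts freely on all of $C$, then $C\to C/\Gamma$ is a connected nontrivial covering. A section would make $C$ disconnected unless $\Gamma=1$. So in either case $\Gamma=1$.

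The main obstacle is the local claim: a finite orthogonal group $K\neq 1$ containing no reflections admits no continuous section of $\R^n\to\R^n/K$ near $0$. We will try to prove it by restricting to a small sphere $S$ and removing the fixed sets of nontrivial elements. Each such fixed set has codimension $\ge 2$ in $\R^n$, so $S$ minus them is connected when $n\ge 2$ (for $n=1$ the only nontrivial element is a reflection). On this set $K$ acts freely, giving a connected $|K|$-sheeted cover. A continuous section of the cover would be a single sheet homeomorphic to the base, open and closed in a connected total space, which forces $|K|=1$. Some care is needed in the global step as well: we must make sure $s$ genuinely restricts to the free part of $C$ over its image, i.e.\ that the section's values over $q(C_{\rm free})$ lie in $W$-translates of $C$. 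This follows because the points over $q(C)$ are exactly $W\Gamma\cdot C=G\cdot C$, and after replacing $s$ by $w^{-1}s$ on each connected piece we land in $C$.
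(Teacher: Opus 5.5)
Your argument is correct and is essentially the paper's proof: a chamber $C$ of the wall complement differs from its free part $C\cap F$ only by fixed components of codimension $\ge 2$, so chambers correspond $G$-equivariantly to $\pi_0(F)$. Your wall-crossing transitivity of $W$ and your sheet argument forcing the chamber stabilizer $\Gamma$ to be trivial are exactly \Cref{prop:reflection-transitivity} and \Cref{lem:section-free-components}.

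Two small remarks. Once $C_{\mathrm{free}}$ is known to be connected, the clopen-sheet argument already gives $\Gamma=1$, so your local lemma on reflection-free linear groups is redundant. Also, your opening claim that $W$ acts simply transitively on chambers fails without the section hypothesis (see \Cref{ex:rp2}), though you only ever use transitivity.
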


This paper is structured as follows: In \Cref{section:proof}, we prove \Cref{thm:main}. 
In \Cref{section:applications}, we discuss the converse for dissecting reflections and give applications to Euclidean and lattice actions.

\subsection*{Acknowledgments} 
The first  author was supported by the National Research Foundation of Korea Grant funded by the Korean Government (RS-2025-00521982).
The second author would like to thank Matthias Franz, Nicole Lemire, Vladimir Gorchakov, Krzysztof Pawłowski, and Nikolay Bogachev for discussions. The second author also thanks Jongbaek Song, Michael Davis, and Mikiya Masuda.

\subsection*{Statement on the Use of AI} We used AI tools for proofreading and improving the English, as well as for limited assistance with \Cref{cor:lattice version}. The mathematical ideas and proof strategies are those of the authors.

\section{Proof}\label{section:proof}

For any subset $S$ of $G$, every connected components of the fixed-point set $M^S$ is a totally-geodesic closed submanifold; see \cite[Proposition~5.6.5]{petersenRiemannianGeometry2016}. 
Furthermore, denoting by $M^S_x$ the connected component of $M^S$ containing $x$, we have
\[
   \dim M^S_x = \dim T_x(M^S)=\dim (T_xM)^S=\dim_{\R}\,\bigcap_{g\in S}\left(\ker \left(T_x(g)-\mathrm{id}_{T_xM}\right)\right).
\]
Here are some easy consequences:
\begin{itemize}
    \item the homomorphism $G_x\to O(T_xM)$ assigning $g$ to $T_x(g)$ is injective;
    \item if $\mathrm{codim}\,M^S_x=1$, then  
    \[
    \left\langle S\right\rangle\cong\Z/2:=\left\langle \mathrm{id}_{T_xM^S}\oplus (-1)_{(T_xM^S)^{\bot}}\right\rangle.
    \]
\end{itemize}

Each $x\in M$ has a  $G_x$-neighborhood $U_x$, called a \textbf{slice}, such that $G\times_{G_x}U_x$ forms a $G$-neighborhood of $G(x)$.
We choose each $U_x$ to be the image under $\exp_x$ of a sufficiently small ball centered at $0\in T_xM$.
Here are some easy observations about a slice $U_x$:
\begin{itemize}
    \item any point $y$ in $U_x$ has $G_y\leq G_x$; 
    \item the slice $U_x$ intersects only finitely many fixed-point submanifolds which are connected components of $\{M^g\mid e\ne g\in G\}$.
\end{itemize}
One can refer to \cite[\S~5.6.4]{petersenRiemannianGeometry2016} for details of the slice theory.

Let
$$
    F:=\left\{x\in M \mid G_x=\{e\}\right\} =M\setminus\bigcup_{g\ne e}M^g
$$
be the free locus. 

\begin{lemma}\label{lem:free-locus}
    The free locus $F$ is open and dense.
    Consequently, the restricted quotient map $q_F\colon F\to F/G$ is a principal $G$-bundle.
\end{lemma}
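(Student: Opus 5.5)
The plan is to write $F$ as the complement of a locally finite union of closed sets of empty interior, and then use freeness plus properness to get the bundle structure.

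\emph{Step 1: $F$ is open and dense.}

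First, each fixed-point set is closed and nowhere dense. For $g\ne e$, the set $M^g$ is closed, since $g$ is continuous and $M$ is Hausdorff. Each connected component of $M^g$ is a totally geodesic closed submanifold. Its dimension at $x$ is $\dim(T_xM)^g$. This dimension is strictly less than $\dim M$, because $g\mapsto T_x(g)$ is injective on $G_x$ (first bullet above), so $T_x(g)\neq\mathrm{id}$. Hence every component of $M^g$ is a proper submanifold and has empty interior.

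Next, the union $\bigcup_{g\ne e}M^g$ is locally finite. Fix $x\in M$ and take a slice $U_x$. By the second observation about slices, $U_x$ meets only finitely many connected components of the sets $M^g$ with $g\ne e$. Therefore $U_x\cap\bigcup_{g\ne e}M^g$ is a finite union of closed subsets of $U_x$. It is closed in $U_x$. Each piece is a proper closed submanifold, so each is nowhere dense.

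Openness follows because closedness is local: $\bigcup_{g\ne e}M^g$ is closed, so $F$ is open. Density follows because, inside $U_x$, a finite union of closed nowhere dense sets is nowhere dense. So $F\cap U_x$ is dense in $U_x$, and since the slices cover $M$, $F$ is dense in $M$.

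The only point needing care is the slice observation that $U_x$ meets only finitely many components. I am assuming it as stated. If one wants to rederive it, the argument is: for $y\in U_x$ we have $G_y\le G_x$, and $G_x$ is finite by properness, so only the finitely many $g\in G_x\setminus\{e\}$ can have fixed points in $U_x$. The components of their fixed sets meeting a small geodesic ball around $x$ are those through $x$, by total geodesicity and the shrinking of the ball. This is the main place I would check carefully.

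\emph{Step 2: $q_F$ is a principal $G$-bundle.}

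$F$ is $G$-invariant, since $G_{gx}=gG_xg^{-1}$. The action of $G$ on $F$ is free by definition, and it is proper because it is the restriction of a proper action to an invariant open set.

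For $x\in F$, the slice gives a $G$-equivariant homeomorphism of $G\times_{G_x}U_x=G\times U_x$ onto an open neighborhood $G\cdot U_x$ of $G(x)$. Here $U_x\subset F$, since points $y\in U_x$ have $G_y\le G_x=\{e\}$.

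Thus $q(U_x)\cong U_x$ is an open neighborhood of $q(x)$ in $F/G$, and $q_F^{-1}(q(U_x))\cong G\times U_x$ equivariantly. These are exactly local trivializations with discrete fiber $G$, so $q_F\colon F\to F/G$ is a principal $G$-bundle, i.e.\ a regular covering with deck group $G$.
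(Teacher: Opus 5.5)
Your proof is correct, but your density argument takes a different route from the paper's. The paper argues by contradiction. If a slice contained no free point, choose $\tilde x$ in it with $|G_{\tilde x}|$ minimal. On a smaller slice around $\tilde x$, isotropy can only shrink, so by minimality it is constantly $G_{\tilde x}\neq\{e\}$. A nontrivial subgroup fixing an open set then forces $M^{G_{\tilde x}}=M$, contradicting effectiveness. In the paper, openness is read off from $G_y\le G_x$ on slices, and the bundle claim is left as immediate.

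You instead show directly that every component of $M^g$, $g\neq e$, has positive codimension, via injectivity of $G_x\to O(T_xM)$. You then use local finiteness to see that $M\setminus F$ is closed and nowhere dense. Both arguments rest on the same rigidity fact: an isometry of connected $M$ that fixes a nonempty open set, or fixes a point with trivial differential, is the identity. This fact enters the paper through effectiveness and enters yours through the first bullet.

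The paper's version is leaner, using only monotonicity and finiteness of isotropy on slices. Yours gives more structure: $M\setminus F$ is a locally finite union of closed totally geodesic submanifolds of positive codimension. That is exactly the picture exploited in the transversality argument of Proposition~\ref{prop:reflection-transitivity}.

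The step you flag is cleanest by linearization. On a normal ball $U_x=\exp_x(B)$ one has $g\circ\exp_x=\exp_x\circ T_x(g)$ for $g\in G_x$. Hence $U_x\setminus F=\exp_x\bigl(\bigcup_{e\neq g\in G_x}(T_xM)^g\cap B\bigr)$ is a finite union of images of proper linear subspaces, which gives density at once. Your Step~2 also usefully spells out the local trivialization that the paper leaves to ``consequently''.
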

\begin{proof}
    The existence of slices implies that $F$ is open.

   To see that $F$ is dense, we claim that each slice has a free point.
   If a slice contained no free points, choose $\tilde{x}$ in it so that $|G_{\tilde{x}}|$ is minimal.
   Taking a smaller slice $U_{\tilde{x}}$ contained in the original slice, we have $G_y \leq G_{\tilde{x}}$ for every $y\in U_{\tilde{x}}$.
   Then, minimality gives $G_y = G_{\tilde{x}}\ne \{e\}$.
   Hence, $G_{\tilde{x}}$ acted trivially on $U_{\tilde{x}}$. Then the fixed point set $M^{G_{\tilde{x}}}$ would be the whole manifold $M$, contrary to the effective $G$-action.
\end{proof}

\begin{lemma}\label{lem:section-free-components}
    If $q\colon M\to M/G$ admits a continuous section~$s$, then the induced action of $G$ on $\pi_0(F)$ is free.
\end{lemma}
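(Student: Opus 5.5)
The plan is to show that a continuous section of $q$ trivializes the principal $G$-bundle $q_F\colon F\to F/G$ from \Cref{lem:free-locus}. Once $F\cong G\times (F/G)$ equivariantly, with $G$ acting by left multiplication on the discrete factor, the components of $F$ are the sets $\{g\}\times C'$ for components $C'$ of $F/G$. Then $G$ visibly permutes them freely.

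\emph{Step 1: restrict the section.} Since $F$ is $G$-invariant, we have $q^{-1}(q(F))=F$. Hence for $y\in F/G$ the point $s(y)$ lies in $q^{-1}(y)\subset F$. So $s$ restricts to a continuous section $s\colon F/G\to F$ of $q_F$.

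\emph{Step 2: build the trivialization.} Define
\[
\Phi\colon G\times F/G\to F,\qquad \Phi(g,y)=g\,s(y).
\]
$\Phi$ is continuous and $G$-equivariant. It is surjective because every $x\in F$ lies in the orbit $G\,s(q(x))$. It is injective because the action on $F$ is free and $s$ picks exactly one point in each orbit.

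The inverse is $x\mapsto (\gamma(x),q(x))$, where $\gamma(x)\in G$ is the unique element with $x=\gamma(x)\,s(q(x))$. The main point is that $\gamma$ is continuous, i.e.\ locally constant. Since $G$ is discrete and acts freely and properly on $F$, the map $q_F$ is a covering map. Take an evenly covered open neighbourhood $V$ of $q(x)$, chosen connected: $F/G$ is a manifold, being the quotient of a free proper smooth action, so it is locally connected. Then $q_F^{-1}(V)=\bigsqcup_{h\in G} hW$ with each $hW\to V$ a homeomorphism.

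The set $s(V)$ is connected and lies in $q_F^{-1}(V)$, so it lies in a single sheet $h_0W$. The point $x$ lies in some sheet $h_1W$. For $x'\in h_1W$, the element $\gamma(x')$ maps the sheet containing $s(q(x'))$, namely $h_0W$, to the sheet $h_1W$. Since sheets are permuted simply transitively, $\gamma(x')=h_1h_0^{-1}$ on all of $h_1W$. Thus $\gamma$ is locally constant and $\Phi$ is a homeomorphism.

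\emph{Step 3: conclude.} Because $F/G$ is locally connected, its components $C'$ are open. Hence the components of $G\times F/G$ are exactly the sets $\{g\}\times C'$. Now suppose $g\in G$ maps a component $\Phi(\{h\}\times C')$ to itself. By equivariance, $\Phi(\{gh\}\times C')=\Phi(\{h\}\times C')$, which forces $gh=h$ and so $g=e$. Therefore $G$ acts freely on $\pi_0(F)$.

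The only delicate step is the continuity of $\Phi^{-1}$ in Step 2, i.e.\ the local constancy of $\gamma$. This is handled by the covering-space structure of $q_F$ together with local connectedness of $F/G$.
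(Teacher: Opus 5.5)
Your proof is correct and follows the same route as the paper: restrict $s$ to a section of the principal bundle $q_F$, trivialize $F\cong G\times F/G$ equivariantly, and observe that $G$ permutes the components $\{g\}\times C'$ freely. You supply sound details that the paper leaves implicit, notably the local constancy of $\gamma$ via evenly covered connected neighbourhoods. The appeal to local connectedness in Step 3 is unnecessary, since $G$ is discrete and so the components of $G\times F/G$ are the sets $\{g\}\times C'$ anyway.
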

\begin{proof}
    The map $s$ restricts to a section of the principal $G$-bundle $q_F$.
    Then the total space $F$ is equivariantly homeomorphic to $G\times F/G$.  
    It follows that no non-identity element of $G$ preserves a connected component of $F$.
\end{proof}

  The \textbf{reflection subgroup} $W$ of $G$ is defined by
$$
    W :=\langle r\in G \mid r\text{ is a reflection over $M$}\rangle.
$$
The codimension-one components of the sets $M^r$ are called \textbf{reflection hypersurfaces}.

\begin{proposition}\label{prop:reflection-transitivity}
    The reflection subgroup $W$ acts transitively on $\pi_0(F)$.
\end{proposition}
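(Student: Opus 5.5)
The plan is to use a general-position path argument. Fix two components $C, C'$ of $F$. Since $M$ is connected and $F$ is open and dense (\Cref{lem:free-locus}), we will join a point of $C$ to a point of $C'$ by a path $\gamma$ in $M$ chosen transverse to the singular set $M\setminus F=\bigcup_{g\ne e}M^g$. Then $\gamma$ leaves the free locus only at finitely many points, and at each such point it crosses a reflection hypersurface transversally, at a point lying on no other fixed-point submanifold. Each crossing will be shown to move between two components of $F$ related by a reflection. Chaining these crossings, we get $C'=wC$ for some $w\in W$.

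The first step is to organize the singular set by codimension. By the remarks at the start of \Cref{section:proof}, every component of every $M^g$ with $g\ne e$ is a closed totally geodesic submanifold. Each slice $U_x$ meets only finitely many of them, so the family is locally finite. Let $\Sigma_1$ be the union of the codimension-one components, i.e.\ the reflection hypersurfaces. Let $\Sigma_{\ge 2}$ be the union of the components of codimension $\ge 2$ together with the pairwise intersections of distinct reflection hypersurfaces. The latter are transversal intersections, or else coincide, since they are totally geodesic and distinct through a common point. Hence $\Sigma_{\ge 2}$ is a locally finite union of closed submanifolds of codimension $\ge 2$. Its complement $M\setminus\Sigma_{\ge 2}$ is therefore open, dense and connected: removing a closed locally finite union of codimension-$\ge 2$ submanifolds does not disconnect a connected manifold. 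We prove this by a standard transversality argument, perturbing a path locally in charts, using local finiteness to handle one piece at a time.

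Next comes the local picture at a point $x\in\Sigma_1\setminus\Sigma_{\ge 2}$. Here $x$ lies on exactly one fixed-point component, a hypersurface $H\subset M^r$. Every $g\in G_x\setminus\{e\}$ fixes $x$, so $M^g_x$ is a fixed-point component through $x$, and hence $M^g_x=H$. Thus all of $G_x$ fixes $H$ pointwise. Since $G_x\to O(T_xM)$ is injective and $\mathrm{codim}\,H=1$, the second bullet gives $G_x=\{e,r\}$ with $r$ acting on $T_xM$ as the reflection across $T_xH$. In a small slice $U_x$, the set $U_x\setminus H$ has two half-ball components. Both lie in $F$, because points of $U_x$ have stabilizer in $G_x$ and points off $H$ are not fixed by $r$. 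The element $r$ swaps the two half-balls. So the two components of $F$ containing them are interchanged by $r\in W$.

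Finally, take a path $\gamma$ from $C$ to $C'$ inside $M\setminus\Sigma_{\ge 2}$, and perturb it to be transverse to $\Sigma_1$. It then meets $\Sigma_1$ at finitely many times $t_1<\dots<t_k$, using compactness of the path and local finiteness. Between consecutive crossings the path stays in a single component $C_i$ of $F$, and by the previous paragraph $C_{i}=r_iC_{i-1}$ for a reflection $r_i$. Hence $C'=r_k\cdots r_1C\in W C$, which proves transitivity.

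The main obstacle is the general-position step. We must ensure that $M\setminus\Sigma_{\ge 2}$ is connected and that paths can be made transverse to $\Sigma_1$ with only finitely many crossings, given that the family of fixed submanifolds may be infinite though locally finite. I would handle this by covering the path by finitely many slices. In each slice only finitely many closed submanifolds appear, so the usual finite-dimensional transversality (e.g.\ a small generic piecewise-geodesic perturbation) applies there.
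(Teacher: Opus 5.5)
Your proposal is correct and follows essentially the same route as the paper: you perturb a path from $C$ to $C'$ into general position with respect to the locally finite family of fixed-point submanifolds, so that it meets only reflection hypersurfaces, at finitely many points with stabilizer $\{e,r\}$, and you chain the resulting reflections to get $C'=r_k\cdots r_1C$. The paper compresses the general-position step into a single transversality perturbation inside finitely many slices. You instead make two details explicit: the two-stage version (first avoid the codimension-$\ge 2$ locus, including pairwise intersections of reflection hypersurfaces, then make the path transverse to $\Sigma_1$), and the verification that $G_x=\{e,r\}$ at the crossing points.
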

\begin{proof}
    Let $C$ and $C'$ be connected components of $F$, and choose  a smooth path $\gamma$ in $M$ from an interior point of $C$ to one of $C'$. 

     The path $\gamma$ is covered by finitely many slices $\{U_x,\, x\in \alpha\}$. Then we can get an open neighborhood $U$ of $\gamma$ whose closure is also covered by those slices. There are finitely many fixed-point submanifolds $\{N_i,\,i\in\beta\}$ that intersect $U$. Then we can perturb $\gamma$ inside $U$ to get a new path $\tilde{\gamma}$ such that $\tilde{\gamma}$ meets transversely those submanifolds
     $\{N_i,\,i\in\beta\}$. Consequently, the path $\tilde{\gamma}$ only meets those reflection hypersurfaces, at finitely many points, each lying on a unique reflection hypersurface.

    Set $C_0=C$.
    Suppose these crossings occur in order at points $z_1,\ldots,z_k$, let $r_i$ be the unique reflection generating $G_{z_i}$.
    If $C_i$ denotes the component containing the path immediately after the crossing at $z_i$, the local normal form of $r_i$ gives $r_i C_{i-1}=C_i$.
    Then $C'=(r_k\cdots r_1)C$.
    Since each $r_i$ belongs to $W$, the action of $W$ on $\pi_0(F)$ is transitive.
\end{proof}

\begin{proof}[\textbf{Proof of \Cref{thm:main}}]
    By \Cref{lem:free-locus}, the set $F$ is nonempty, so we may fix a connected component $C$ of $F$. 
    Let $g\in G$. Due to \Cref{prop:reflection-transitivity},
   there is some $w\in W$ such that $wC=gC$.
    Hence $w^{-1}g$ preserves $C$. 
    It follows from \Cref{lem:section-free-components} that $w^{-1}g=e$.
    Thus $g=w\in W$, as desired.
\end{proof}

\begin{corollary}
  If $q\colon M\to M/G$ admits a continuous section~$s$, then the strict fundamental domain $D$, as well as $F/G$, is connected.
\end{corollary}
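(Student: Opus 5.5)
The plan is to prove connectedness of $F/G$ first and then deduce connectedness of $D$ from it by a density argument. By \Cref{lem:free-locus} the restriction $q_F\colon F\to F/G$ is a principal $G$-bundle; in particular $q_F$ is continuous and surjective onto $F/G$. By \Cref{prop:reflection-transitivity}, $W$ acts transitively on $\pi_0(F)$, and by \Cref{thm:main} we have $W=G$. Hence every component of $F$ is a translate $gC$ of a single component $C$. Since $q_F(gC)=q_F(C)$, we get $F/G=q_F(C)$. This is the continuous image of a connected set, so $F/G$ is connected.

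Next I transfer this to $M/G$ and then to $D$. Since $q$ is an open map for group actions, the image of the dense open set $F$ (\Cref{lem:free-locus}) is dense and open in $M/G$. Indeed, if $V\subset M/G$ is a nonempty open set, then $q^{-1}(V)$ is a nonempty open set, so it meets $F$. Thus $F/G=q(F)$ is a connected dense subset of $M/G$. Its closure, which is $M/G$, is therefore connected.

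Finally, $D=s(M/G)$ and $q|_D\colon D\to M/G$ is a homeomorphism by definition of a strict fundamental domain. So $D$ is homeomorphic to the connected space $M/G$, and hence is connected.

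I expect no serious obstacle. The only points to check carefully are the following.
\begin{enumerate}
  \item $F/G$ is identified with the subspace $q(F)\subset M/G$. This holds because $F$ is open and $G$-invariant, so the quotient topology on $F/G$ agrees with the subspace topology.
  \item Density of $q(F)$ comes from density of $F$ together with continuity of $q$.
\end{enumerate}
Alternatively, one could skip $M/G$ and argue directly that $s(F/G)=D\cap F$ is a connected dense subset of $D$. That route needs density in $D$, which again follows most easily through the homeomorphism $D\cong M/G$, so the route above is the cleanest.
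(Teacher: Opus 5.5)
Your proof is correct and rests on the same key fact as the paper's one-line proof, namely transitivity on $\pi_0(F)$. You turn that fact into connectedness of $F/G$ via $F/G=q(C)$, whereas the paper uses the splitting $F\cong G\times(F/G)$.

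Two of your steps can be shortened. Transitivity of the subgroup $W\le G$ already gives $F/G=q(C)$ without invoking $W=G$. Also, $D\cong M/G=q(M)$ is connected directly because $M$ is connected, so the density argument is not needed.
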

\begin{proof}
    Because $G\times (F/G)\cong F$ and $G$ acts transitively on $\pi_0(F)$.
\end{proof}

The converse direction of \Cref{thm:main} does not hold in general.
\begin{example}\label{ex:rp2}
    Let $M=\RP^2$ and let $G=\langle r\rangle\cong\Z/2$, where $r$ is reflection in a projective line.
    Since the fixed-point set consists of that projective line and one isolated point, $r$ is a reflection, and hence $G=W$.
    
    Here $F$ is connected. 
    If a section existed, \Cref{lem:section-free-components} would imply that the nontrivial group $G$ acts freely on the one-point set $\pi_0(F)$, which is impossible.
\end{example}

\section{Generalizations and Applications}\label{section:applications}

A reflection~$r$ is called \textbf{dissecting} if $M\setminus M^r$ is  not connected. 
In this case, $M^r$ is a closed submanifold of codimension one, and $M\setminus M^r$ has two connected components; see \cite[Theorem~3]{neebSymmetricSpacesDissecting2022}.
\begin{corollary}\label{cor:dissecting}
    Assume additionally that one of the following conditions holds:
    \begin{itemize}
        \item every reflection in $G$ is dissecting;
        \item the manifold $M$ is simply connected.
    \end{itemize}
    Then $q\colon M\to M/G$ admits a continuous section if and only if $G$ is generated by reflections.
\end{corollary}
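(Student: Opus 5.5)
One direction is \Cref{thm:main}. For the converse, assume $G=W$. The plan is to build a strict fundamental domain as the closure $D=\overline{C}$ of a component $C$ of the free locus $F$, and show that $q|_D$ is a homeomorphism.

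\textbf{Step 1: reduce the simply connected case to the dissecting case.} First I reduce the second hypothesis to the first. If $M$ is simply connected, then $H^1(M;\Z/2)=0$. Each reflection hypersurface $H$ is a closed, properly embedded codimension-one submanifold, so it carries a $\Z/2$ Poincaré-dual class in $H^1(M;\Z/2)$, and this class vanishes. Hence $H$ separates $M$. So $M\setminus M^r$ is disconnected and $r$ is dissecting. (I would also check that for a codimension-one fixed component the other components of $M^r$ are harmless; separation by $H$ already disconnects $M\setminus M^r$.) From now on every reflection is dissecting, and by the cited result of Neeb each $M^r$ is a single hypersurface splitting $M$ into two half-spaces $M^r_\pm$.

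\textbf{Step 2: describe $C$ as an intersection of half-spaces.} Fix a component $C$ of $F$. For each reflection $r$, the set $C$ lies in one half-space, which I call $M^r_C$. I want to show
\[
\overline C=\bigcap_r \overline{M^r_C}.
\]
To prove this, take a point $y$ in the right-hand side and join it to $C$ by a path. Using the transversality argument of \Cref{prop:reflection-transitivity}, any crossing of a wall would place the path in the opposite half-space, so the path cannot cross walls. Hence interior points of the intersection not lying on walls belong to $C$. Local finiteness of walls, which follows from the slice observations, gives the equality of closures.

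\textbf{Step 3: surjectivity of $q|_D$.} Given $x\in M$, choose a free point near $x$, lying in some component $C'$. Transitivity of $W$ on $\pi_0(F)$ gives $w\in W$ with $wC'=C$, so $G(x)$ meets $\overline C$ by a limiting argument. Alternatively, minimize $d(gx,p)$ over $g\in G$ for a fixed $p\in C$, using properness. If $gx$ is on the wrong side of a wall $M^r$, then $d(rgx,p)<d(gx,p)$, because $r$ is an isometry swapping the sides. I will prove this strict inequality by taking a geodesic from $gx$ to $p$ and reflecting its initial portion up to where it meets $M^r$. So the minimizer lies in $\bigcap_r\overline{M^r_C}=D$.

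\textbf{Step 4: injectivity of $q|_D$ (main obstacle).} I must show that if $x,gx\in D$ then $gx=x$. I would argue by induction on the word length of $g$ relative to the chamber system. Freeness of $W$ on $\pi_0(F)$ (the converse of \Cref{lem:section-free-components}) is not yet available, so I instead use the distance argument above. Choose $p\in C$ generic. Both $x$ and $gx$ minimize distance to $p$ among orbit points in $D$. Then I need that $D$ is "convex enough" that the stabilizer $G_x$ is generated by the reflections whose walls contain $x$; this is what forces $gx=x$. Concretely, I would consider the chambers $C$ and $g^{-1}C$, both having $x$ in their closure. By Step 2 applied locally inside the slice $U_x$, the walls through $x$ are the only ones separating the local chambers. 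This gives an element $h\in G_x$, a product of reflections fixing $x$, with $hC=gC$ near $x$. Then $h^{-1}g$ preserves $C$. To conclude $h^{-1}g=e$, I need $G$ to act freely on $\pi_0(F)$. This would follow from the dissecting hypothesis: a nontrivial $g$ stabilizing $C$ is a product of reflections, and a separating-wall count shows such a product moves $C$. This mirrors the standard proof for Coxeter complexes, which I expect to be the hardest part to make rigorous.

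Finally, $q|_D$ is a continuous closed bijection, since $D$ is closed and $q$ is proper on $G$-orbits, so it is a homeomorphism.
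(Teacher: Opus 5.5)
Your Step 1 is a correct, self-contained replacement for the citation the paper uses (Alekseevsky et al., Theorem 2.8) to reduce the simply connected case to the dissecting case. The mod-$2$ intersection of a loop with $H$ does detect non-separation. One small correction: ``$M^r$ is a single hypersurface'' is false in general, e.g.\ for the reflection $z\mapsto\bar z$ of $S^1$. This is harmless if you index half-spaces by reflections. For the converse, the paper does not build the domain by hand; it cites Davis's Theorem 4.1(vi), that a group generated by dissecting reflections has a strict fundamental domain. You try to reprove that theorem, and its decisive step is missing.

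\textbf{The gap: simple transitivity is never proved.} You need that $W$ acts simply transitively on chambers, i.e.\ that $wC=C$ forces $w=e$. You defer this in Step 4 to ``a separating-wall count,'' and Step 2 silently depends on the same fact.

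\emph{Step 2 fails as argued.} A path from $y$ to $C$ may cross a wall and later cross it back. Knowing that $y$ and $C$ lie on the same side of every wall only says each wall is crossed an even number of times. It does not show that the chamber of $y$ is $C$.

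\emph{Step 3 inherits the problem.} Your distance-minimizing argument only lands in $\bigcap_r\overline{M^r_C}$, so it also needs Step 2. Your first surjectivity argument, via transitivity and a limit, is fine.

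\emph{A naive count is not enough.} If $wC=C$, the number $n(w)$ of reflections separating $C$ from $wC$ is $0$. But nothing you state excludes a nontrivial $w$ with $n(w)=0$.

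\textbf{What is missing.} You need the Coxeter-type identity $n(w)=\ell_S(w)$, proved as follows.
\begin{enumerate}
\item Let $S$ be the set of reflections in the panels of $C$. The transversal paths of \Cref{prop:reflection-transitivity} give galleries. Hence every chamber is $wC$ with $w\in\langle S\rangle$.
\item Every reflection then equals $wsw^{-1}$ for such a $w$ and some $s\in S$. Hence $W=\langle S\rangle$.
\item Dissection makes ``separating'' well defined.
\item Suppose the gallery of a reduced word $s_1\cdots s_k$ crosses some wall twice, i.e.\ $t_i=t_j$ for some $i<j$, where $t_i=s_1\cdots s_{i-1}s_is_{i-1}\cdots s_1$. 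Then $w=t_it_jw=s_1\cdots\widehat{s_i}\cdots\widehat{s_j}\cdots s_k$, contradicting reducedness.
\end{enumerate}
This identity gives both $\overline{C}=\bigcap_r\overline{M^r_C}$ in Step 2 and freeness of $G$ on $\pi_0(F)$ in Step 4. Once it is supplied, or Davis's theorem is cited as in the paper, your local argument with $W_x\le G_x$ in the slice $U_x$ and the closed-bijection conclusion go through.
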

\begin{proof}
The two conditions both guarantee that all reflections in $G$ are dissecting; see \cite[Theorem~2.8]{alekseevskyReflectionGroupsRiemannian2007}.
The forward implication follows from \Cref{thm:main}.
    Conversely, if $G$ is generated by dissecting reflections, then 
    \cite[Theorem~4.1.(vi)]{davisGroupsGeneratedReflections1983} provides a strict fundamental domain.
\end{proof}

We can also complete those classical results on Coxeter groups; see for example \cite[Theorem~1.12, Theorem~4.8]{humphreysReflectionGroupsCoxeter1992}.

\begin{corollary}\label{cor:euclidean}
    Let $G\leq \Isom(\E^n)$ be a discrete subgroup.
    Then $\E^n\to \E^n/G$ admits a continuous section if and only if $G$ is generated by affine reflections.
\end{corollary}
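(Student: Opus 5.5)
The plan is to deduce \Cref{cor:euclidean} from \Cref{cor:dissecting} with $M=\E^n$. First I will check the standing hypotheses. A discrete subgroup $G\leq\Isom(\E^n)$ acts properly on $\E^n$: for compact $K$, the set $\{g\in G\mid gK\cap K\neq\emptyset\}$ lies in a compact subset of $\Isom(\E^n)$, and a discrete subgroup meets a compact set in a finite set. The action is smooth and effective because $G$ consists of isometries. Also $\E^n$ is connected and simply connected, so the second alternative of \Cref{cor:dissecting} holds.

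By \Cref{cor:dissecting}, $\E^n\to\E^n/G$ then admits a continuous section if and only if $G$ is generated by reflections over $M=\E^n$ in the sense of the paper. So it remains to show that these are exactly the affine reflections.

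The fixed set of an isometry $g(x)=Ax+b$ is either empty or an affine subspace $p+\ker(A-I)$, hence connected. So $g$ is a reflection over $\E^n$ exactly when $\ker(A-I)$ has dimension $n-1$ and $g$ has a fixed point. In that case $A$ is orthogonal and fixes a hyperplane pointwise, so $A$ is either the identity or the orthogonal reflection across that hyperplane. The first case is excluded: $A=I$ with a fixed point would give $g=\mathrm{id}$, whose fixed set has codimension zero. Hence $g$ is the affine reflection across the hyperplane $M^g$. Conversely, every affine reflection has a hyperplane as its fixed set, so it is a reflection over $\E^n$.

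The main obstacle is only this small bookkeeping: excluding translations and glide-type isometries, which have empty fixed sets, and making sure the paper's definition of "reflection over $M$" agrees with "affine reflection". The properness of discrete isometry groups is standard.
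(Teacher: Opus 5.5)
Your proposal is correct and follows the route the paper intends: \Cref{cor:euclidean} is stated without a separate proof, as the specialization of \Cref{cor:dissecting} to the simply connected manifold $\E^n$, and the paper additionally points to the classical chamber and alcove theorems in Humphreys for the ``if'' direction. Your checks that the action is proper, and that reflections over $\E^n$ are exactly the affine reflections (since fixed sets of isometries of $\E^n$ are empty or connected affine subspaces), supply the bookkeeping the paper leaves implicit.
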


We can easily generalize to the discrete case of the strict fundamental domain; that is, the restriction $q|_D$ is bijective. 
\begin{corollary}[{cf. \cite[Theorem~2]{farkasStretchedWeightLattices1984}}]\label{cor:lattice version}
    Let $G\leq O(\R^n)$ be a finite subgroup preserving $\Z^n$.
Then $G$ is a Weyl group if and only if the $G$-action on $\Z^n$
has a discrete strict fundamental domain of the form
$D\cap\Z^n$, where $D\subset\R^n$ is a rational polyhedral cone.
\end{corollary}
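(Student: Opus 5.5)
We prove the two implications separately. Throughout, a finite subgroup $G\leq O(\R^n)$ preserving $\Z^n$ is a finite group acting linearly on $\R^n$ (discrete, proper, smooth, effective), and by a \emph{Weyl group} we mean a finite group generated by reflections of $\R^n$ that preserves the lattice $\Z^n$, i.e.\ a crystallographic reflection group.

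For the forward direction, assume $G$ is a Weyl group. Since $\R^n$ is simply connected, \Cref{cor:dissecting} (or the classical theory, e.g.\ \cite[Theorem~1.12]{humphreysReflectionGroupsCoxeter1992}) gives a closed Weyl chamber $D$ as a strict fundamental domain for $G$ on $\R^n$. Because $G$ preserves $\Z^n$, each reflection $r\in G$ is an integer matrix, so $r-\mathrm{id}$ is integral and its image, the root line, is spanned by a rational vector $\alpha$. The reflection hyperplane is $\alpha^\perp$, which is rational. Hence $D$, being an intersection of half-spaces bounded by these hyperplanes, is a rational polyhedral cone. Since $q|_D\colon D\to \R^n/G$ is a bijection and $\Z^n$ is $G$-stable, every $G$-orbit in $\Z^n$ meets $D$ in exactly one point, and that point lies in $D\cap\Z^n$. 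Thus $D\cap\Z^n$ is a discrete strict fundamental domain.

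For the converse, suppose $D\cap\Z^n$ is a discrete strict fundamental domain with $D$ a rational polyhedral cone. The plan is to upgrade this to a genuine strict fundamental domain $D$ for the action on $\R^n$ and then apply \Cref{thm:main}.
\begin{enumerate}
\item[(a)] \textbf{Covering.} Every $x\in\Q^n$ has a multiple $mx\in\Z^n$ with $m>0$. Some $g$ satisfies $g(mx)\in D$, and since $D$ is a cone, $gx\in D$. Hence $G\cdot D\supset \Q^n$. Because $G$ is finite and $D$ is closed, $G\cdot D$ is closed, so $G\cdot D=\R^n$.
\item[(b)] \textbf{Injectivity.} Suppose $x,gx\in D$ with $gx\neq x$ for some $x\in\R^n$. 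This inequality is an open condition, while "$x\in D$ and $gx\in D$" means $x\in D\cap g^{-1}D$, a rational polyhedral cone. We need a rational point $y$ in this cone with $gy\ne y$. The points of $D\cap g^{-1}D$ fixed by $g$ form a rational linear subspace intersected with the cone. If that set were the whole cone, then $gx=x$, a contradiction. So it is a proper rational face-section of the cone, and rational points are dense in the rational cone $D\cap g^{-1}D$, which yields such a $y$. Scaling $y$ to an integer point then contradicts uniqueness in $D\cap\Z^n$.
\end{enumerate}
Therefore $q|_D$ is a continuous bijection. It is also closed, since $q$ is a closed map for a finite group action and $D$ is closed. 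So $q|_D$ is a homeomorphism, $q$ has a continuous section, and \Cref{thm:main} shows that $G$ is generated by reflections. Together with lattice preservation, this means $G$ is a Weyl group.

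The main obstacle is step (b): density of rational points requires $D\cap g^{-1}D$ to be rational polyhedral, which holds because $g$ is an integer matrix, and $\ker(g-\mathrm{id})$ is a rational subspace. A second subtle point is that rational-point density inside a cone must be used relative to the complement of a rational subspace, which holds because that complement is open in the cone and rational points are dense in any rational polyhedral cone.
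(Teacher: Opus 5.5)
Your proposal is correct and follows essentially the same route as the paper: pass from $\Z^n$ to $\Q^n$ using the cone property, get surjectivity from closedness of $G\cdot D$ and injectivity from density of rational points in the rational cone $D\cap g^{-1}D$, then invoke \Cref{thm:main} (the paper goes through \Cref{cor:euclidean} instead). Your additions fill in details the paper leaves implicit: the forward direction via rational Weyl chambers, and the check that $q|_D$ is closed and hence a homeomorphism. Your contradiction phrasing of injectivity (a nonempty relatively open non-fixed set must contain a rational point) is equivalent to the paper's limit argument $gx_i=x_i\Rightarrow gx=x$.
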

\begin{proof}
Here it suffices to prove the ``if" direction.
 Suppose that $D\subset \R^n$ is a rational polyhedral cone such that
$D\cap\Z^n$ is a discrete strict fundamental domain for the $G$-action on
$\Z^n$. Then $D\cap\Q^n$ is a strict fundamental
domain for the $G$-action on $\Q^n$.
We claim that $D$ is a strict fundamental domain for the $G$-action on
$\R^n$; then by \Cref{cor:euclidean}, $G$ is
generated by reflections, and hence is a Weyl group.

For $x\in\R^n$, choose a sequence $x_i\in\Q^n$ converging to $x$.
For each $i$, there is $g_i\in G$ such that $g_i x_i\in D$. Since $G$ is
finite, after passing to a subsequence we may assume that $g_i=g$ is
constant. As $D$ is closed, $gx\in D$.

For uniqueness, suppose that $x,gx\in D$ for some $g\in G$. Since
$D\cap g^{-1}D$ is again a rational polyhedral cone, its rational points
are dense. Choose $x_i\in (D\cap g^{-1}D)\cap\Q^n$ converging to $x$.
Then $x_i,gx_i\in D\cap\Q^n$, so the uniqueness over $\Q^n$ implies
$gx_i=x_i$. Passing to the limit gives $gx=x$. 
\end{proof}

\bibliographystyle{amsalpha}
\bibliography{references}

\end{document}